\documentclass[smallextended]{svjour3}       
\smartqed  
\usepackage{amsfonts}
\usepackage{amsmath}
\usepackage{graphicx}
\usepackage[english]{babel}

%
%
%
%
%

\newcommand{\Sum}{\displaystyle\sum}


\begin{document}

\title{A generalization of Bohr's Equivalence Theorem}


\author{J.M. Sepulcre \and
        T. Vidal
} \institute{J.M. Sepulcre  \and T. Vidal   \at
              Department of Mathematics\\
              University of Alicante\\
              03080-Alicante, Spain\\
             \email{JM.Sepulcre@ua.es, tmvg@alu.ua.es}
}

\date{Received: date / Accepted: date}
\maketitle
\begin{abstract}
Based on a generalization of Bohr's equivalence relation for general Dirichlet series, in this paper we
study the sets of values taken by certain classes of equivalent almost periodic functions in their strips of almost periodicity. In fact, the main result of this paper consists of a result like Bohr's equivalence theorem  extended to the case of these functions.
\keywords{Almost periodic functions \and Exponential sums \and Bohr equivalence theorem \and Dirichlet series}
 \subclass{30D20 \and 30B50 \and 11K60 \and 30Axx}
\end{abstract}

\section{Introduction}

The main motivation of this paper arises from the work of the Danish mathematician Harald Bohr concerning both the equivalence relation for general Dirichlet series, which is named after him, and almost periodic functions, whose theory was created and developed in its main features by himself.

On the one hand, general Dirichlet series consist of those exponential sums that take the form
$$\Sum_{n\geq 1}a_ne^{-\lambda_n s},\ a_n\in\mathbb{C},\ s=\sigma+it,$$
 where
$\{\lambda_n\}$ is a strictly increasing sequence of posi\-tive numbers
tending to infinity. 
In particular, it is widely known that the Riemann zeta function $\zeta(s)$, which plays a pivotal role
in analytic number theory, is defined as the analytic continuation of the function defined for $\sigma>1$ by the sum
$\sum_{n=1}^{\infty}\frac{1}{n^s}$, which constitutes a classical Dirichlet series.

In the beginnings of the 20th century, H. Bohr gave important steps in the understanding
of Dirichlet series and their regions of convergence, uniform convergence and absolute
convergence. As a result of his investigations on these functions, he introduced an equivalence relation among them that led to so-called Bohr's equivalence theorem, which shows that equivalent Dirichlet series take the same values in certain vertical lines or strips in the complex plane (e.g. see \cite{Apostol,BohrDirichlet,Rigue,Spira}). 

On the other hand, Bohr also developed during the $1920$'s the theory of almost periodic functions, which opened a way to study a wide class of trigonometric series of the general type and even exponential series (see for example \cite{Besi,Bohr,Bohr2,Corduneanu1,Jessen}). The space of almost periodic functions in a vertical strip $U\subset \mathbb{C}$, which will be denoted in this paper as $AP(U,\mathbb{C})$, coincides with the set of the functions which can be approximated uniformly in every reduced strip 
of $U$ by exponential polynomials $a_1e^{\lambda_1s}+a_2e^{\lambda_2s}+\ldots+a_ne^{\lambda_ns}$ with complex coefficients $a_j$ and real exponents $\lambda_j$ (see for example \cite[Theorem 3.18]{Corduneanu1}). 
Moreover, S. Bochner observed that the almost periodicity of a function $f$ in a vertical strip $U$ is equivalent to that every sequence $\{f(s + it_n )\},\ t_n\in\mathbb{R},$ of vertical translations of $f$ has a subsequence that converges uniformly for $s$ in $U$.

The purpose of this paper is to try to extend Bohr's equivalence theorem, which concerns to Dirichlet series, to more general classes of almost periodic functions in $AP(U,\mathbb{C})$.
In this respect note that the exponential polynomials and the general Dirichlet series are a particular family of exponential sums or, in other words, expressions of the type
$$P_1(p)e^{\lambda_1p}+\ldots+P_j(p)e^{\lambda_jp}+\ldots,$$
where the $\lambda_j$'s are complex numbers and the $P_j(p)$'s are polynomials in the parameter $p$.
Precisely, by analogy with Bohr's theory, we established in \cite{SV} an equi\-valence relation $\sim$ on the classes $\mathcal{S}_{\Lambda}$ consisting of exponential sums of the form
 \begin{equation}\label{eqqnew}
\sum_{j\geq 1}a_je^{\lambda_jp},\ a_j\in\mathbb{C},\ \lambda_j\in\Lambda,
\end{equation}
where $\Lambda=\{\lambda_1,\lambda_2,\ldots,\lambda_j,\ldots\}$ is an arbitrary countable set of distinct real numbers (not necessarily unbounded), which are called a set of exponents or frequencies. In this paper, we will suppose that $\Lambda$ is a set of exponents for which there exists an integral basis, which means that each exponent $\lambda_j$ is expressible as a finite linear combination, with entire components, of terms of the basis.


From this equivalence relation $\sim$, we will show that every equivalence class in $AP(U,\mathbb{C})/\sim$, associated with a set of exponents which has an integral basis, 
is connected with a certain auxiliary function that originates the sets of values taken by this equivalence class
along a given vertical line included in the strip of almost periodicity (see propositions \ref{pult} and \ref{pultnew} in this paper). This leads us to formulate and prove Theorem \ref{beqg} in this paper, which constitutes the announced extension of Bohr's equivalence theorem. 
Specifically, we prove that two equivalent almost periodic functions, whose associated Dirichlet series have the same set of exponents for which there exists an integral basis, take the same values on any open vertical strip included in their strip of almost periodicity $U$. Moreover, Example \ref{exi} shows that, if we fix an open vertical strip in $U$, the fact that both almost periodic functions take the same values on it does not imply the equivalence of the two functions. 

\section{Equivalence of exponential sums and almost periodic functions}

We first recall the equivalence relation, based on that of \cite[p.173]{Apostol} for general Dirichlet series, which was defined in \cite{SV} in a more general context. 

\begin{definition}\label{DefEquiv}
Let $\Lambda$ be an arbitrary countable subset of distinct real numbers, $V$ the $\mathbb{Q}$-vector space generated by $\Lambda$ ($V\subset \mathbb{R}$), and $\mathcal{F}$
the $\mathbb{C}$-vector space of arbitrary functions $\Lambda\to\mathbb{C}$. 
We define
a relation $\sim$ on $\mathcal{F}$ by $a\sim b$ if there exists a $\mathbb{Q}$-linear map $\psi:V\to\mathbb{R}$ such that
$$b(\lambda)=a(\lambda)e^{i\psi(\lambda)},\ (\lambda\in\Lambda).$$
\end{definition}


Let
$G_{\Lambda}=\{g_1, g_2,\ldots, g_k,\ldots\}$ be a basis of the
vector space over the rationals generated by a set $\Lambda=\{\lambda_1,\lambda_2,\ldots,\lambda_j,\ldots\}$ of exponents, 
which implies that $G_{\Lambda}$ is linearly independent over the rational numbers and each $\lambda_j$ is expressible as a finite linear combination of terms of $G_{\Lambda}$, say
\begin{equation}\label{errej}
\lambda_j=\sum_{k=1}^{q_j}r_{j,k}g_k,\ \mbox{for some }r_{j,k}\in\mathbb{Q}.
\end{equation}
By abuse of notation, we will say that $G_{\Lambda}$ is a basis for $\Lambda$. Moreover, we will say that $G_{\Lambda}$ is an integral basis for $\Lambda$ when $r_{j,k}\in\mathbb{Z}$ for any $j,k$.

The equivalence relation above can be naturally extended to the classes $S_\Lambda$ of exponential sums of type \eqref{eqqnew}.

\begin{definition}\label{DefEquiv00}
Given $\Lambda=\{\lambda_1,\lambda_2,\ldots,\lambda_j,\ldots\}$ a set of exponents, consider $A_1(p)$ and $A_2(p)$ two exponential sums in the class $\mathcal{S}_{\Lambda}$, say
$A_1(p)=\sum_{j\geq1}a_je^{\lambda_jp}$ and $A_2(p)=\sum_{j\geq1}b_je^{\lambda_jp}.$
We will say that $A_1$ is equivalent to $A_2$ if $a\sim b$, where $a,b:\Lambda\to\mathbb{C}$ are the functions given by $a(\lambda_j):=a_j$ y $b(\lambda_j):=b_j$, $j=1,2,\ldots$ and $\sim$ is in Definition \ref{DefEquiv}.
\end{definition}

Consider $A_1(p)$ and $A_2(p)$ two exponential sums in the class $\mathcal{S}_{\Lambda}$, say
$A_1(p)=\sum_{j\geq1}a_je^{\lambda_jp}$ and $A_2(p)=\sum_{j\geq1}b_je^{\lambda_jp}.$
Fixed an integral basis $G_{\Lambda}$ for $\Lambda$, for each $j\geq1$ let $\mathbf{r}_j$ be the vector of integer components satisfying (\ref{errej}).  
Then the equivalence of $A_1$ and $A_2$ 
can be characterized from the existence of $\mathbf{x}_0=(x_{0,1},x_{0,2},\ldots,x_{0,k},\ldots)\in \mathbb{R}^{\sharp G_{\Lambda}}$
such that $b_j=a_j e^{<\mathbf{r}_j,\mathbf{x}_0>i}$ for every $j\geq 1$ (see \cite[Proposition 1]{SV}). 
In fact, all the results of \cite{SV}
which can me formulated in terms of an integral basis are also valid under Definition \ref{DefEquiv00}.




From the result above, it is clear that the set of all exponential sums $A(p)$ in an equivalence class $\mathcal{G}$ in $\mathcal{S}_{\Lambda}/\sim$, where $\Lambda$ has an integral basis, can be determined by a function $E_{\mathcal{G}}:\mathbb{R}^{\sharp G_{\Lambda}}\rightarrow \mathcal{S}_{\Lambda}$ of the form
\begin{equation}\label{2.4.000}
E_{\mathcal{G}}(\mathbf{x}):=\sum_{j\geq1}a_je^{<\mathbf{r}_j,\mathbf{x}>i}e^{\lambda_jp}\text{, }
\mathbf{x}=(x_1,x_2,\ldots,x_k,\ldots)\in%
\mathbb{R}^{\sharp G_{\Lambda}},
\end{equation}%
where $a_1,a_2,\ldots,a_j,\ldots$ are the coefficients of an exponential sum in $\mathcal{G}$ and the $\mathbf{r}_j$'s are the vectors of integer components associated with an integral basis $G_{\Lambda}$ for $\Lambda$. 

In particular, in this paper we are going to use Definition \ref{DefEquiv00} for the case of exponential sums in $\mathcal{S}_{\Lambda}$ of a complex variable $s=\sigma+it$. 
Precisely,
when the formal series in $\mathcal{S}_{\Lambda}$ are handled as exponential sums of a complex variable on which we fix a summation procedure, from equivalence class generating expression (\ref{2.4.000}) we can consider an auxiliary function as follows.

\begin{definition}\label{auxuliaryfunc0}
Given $\Lambda=\{\lambda_1,\lambda_2,\ldots,\lambda_j,\ldots\}$ a set of exponents which has an integral basis, let $\mathcal{G}$ be an equivalence class in $\mathcal{S}_{\Lambda}/\sim$ and $a_1,a_2,\ldots,a_j,\ldots$ be the coefficients of an exponential sum in $\mathcal{G}$.
For each $j\geq 1$ let $\mathbf{r}_j$ be the vector of integer components satis\-fying the equality $\lambda_j=<\mathbf{r}_j,\mathbf{g}>=\sum_{k=1}^{q_j}r_{j,k}g_k$, where
$\mathbf{g}:=(g_1,\ldots,g_k,\ldots)$ is the vector of the elements of an integral basis $G_{\Lambda}$ for $\Lambda$. Suppose that some elements in $\mathcal{G}$, handled as exponential sums of a complex variable $s=\sigma+it$, are summable on at least  a certain set $P$ included in the real axis by some prefixed summation method.
Then we define the auxiliary function  $F_{\mathcal{G}}: P
\times
\mathbb{R}
^{\sharp G_{\Lambda}}\rightarrow
\mathbb{C}$
 associated with $\mathcal{G}$, relative to the basis $G_{\Lambda}$, as
\begin{equation}\label{2.4.000p}
F_{\mathcal{G}}(\sigma,\mathbf{x}):=\sum_{j\geq1}a_je^{<\mathbf{r}_j,\mathbf{x}>i}e^{\lambda_j\sigma}\text{, }
\sigma\in P,\ \mathbf{x}=(x_1,x_2,\ldots,x_k,\ldots)\in%
\mathbb{R}^{\sharp G_{\Lambda}},
\end{equation}%
where the series in (\ref{2.4.000p}) is summed by the prefixed summation method, applied at $t=0$ to the exponential sum obtained from the generating expression (\ref{2.4.000}) with $p=\sigma+it$.
\end{definition}

In particular, see Definition \ref{auxuliaryfunc} which concerns the case of Bochner-Fej\'{e}r summation method and the set $P$ above is formed by the real projection of the strip of almost periodicity of the corresponding exponential sums.

In other words, the auxiliary function $F_{\mathcal{G}}(\sigma,\mathbf{x})$ can be viewed as the composition $F_{\mathcal{G}}=M\circ E_{\mathcal{G}}$, of the class generating expression considered as $E_{\mathcal{G}}:(\sigma,\mathbf{x})\to \sum_{j\geq1}a_je^{<\mathbf{r}_j,\mathbf{x}>i}e^{\lambda_j(\sigma+it)}\in \mathcal{S}_{\Lambda}$ with the application $M:\mathcal{S}_{\Lambda}\rightarrow \mathbb{C}$ which to an exponential sum $A(t)$ generated by (\ref{2.4.000}) assigns a complex number obtained as the summation of $A(t)$ at $t=0$ by the prefixed summation method.

On the other hand, the space of almost periodic functions $AP(U,\mathbb{C})$ coincides with the set of the functions which can be
approximated uniformly in every reduced strip of $U$ by exponential polynomials with complex coefficients
and real exponents (see \cite[Theorem 3.18]{Corduneanu1}). These aproximating  finite exponential sums can be found by
Bochner-Fej\'{e}r's summation (see, in this regard, \cite[Chapter 1, Section 9]{Besi}). Moreover, each sequence of exponential polynomials that converges uniformly to a function $f\in AP(U,\mathbb{C})$ also converges formally to an unique exponential sum, which is called the Dirichlet series of $f$. In this context, we will see in this paper the strong link
between the sets of values in the complex plane taken by such a function, its Dirichlet series and its associated
auxiliary function.



\begin{definition}\label{DF}
Let $\Lambda=\{\lambda_1,\lambda_2,\ldots,\lambda_j,\ldots\}$ be an arbitrary countable set of distinct real numbers. We will say that a function $f:U\subset\mathbb{C}\to\mathbb{C}$ 
is in the class $\mathcal{D}_{\Lambda}$ if it is an almost periodic function in $AP(U,\mathbb{C})$ 
whose associated Dirichlet series 
is of the form
 \begin{equation}\label{eqqo}
\sum_{j\geq 1}a_je^{\lambda_js},\ a_j\in\mathbb{C},\ \lambda_j\in\Lambda,
\end{equation}
where $U$ is a strip of the type $\{s\in\mathbb{C}: \alpha<\operatorname{Re}s<\beta\}$, with $-\infty\leq\alpha<\beta\leq\infty$.
\end{definition}

Any almost periodic function in $AP(U,\mathbb{C})$ 
is determined by its Dirichlet series, 
which is of type (\ref{eqqo}). 
In fact it is convenient to remark that, even in the case that the sequence of the partial sums of its Dirichlet series 
does not converge uniformly, there exists a sequence of finite exponential sums, the Bochner-Fej\'{e}r polynomials, of the type $P_k(s)=\sum_{j\geq 1}p_{j,k}a_je^{\lambda_js}$ 
where for each $k$ only a finite number of the factors $p_{j,k}$ differ from zero, which converges uniformly to $f$ in every reduced strip in $U$ 
and converges formally to the Dirichlet series  
\cite[Polynomial approximation theorem, pgs. 50,148]{Besi}.

Moreover, the equivalence relation of Definition \ref{DefEquiv00} can be immediately adapted to the case of the functions (or classes of functions) which are identifiable by their also called Dirichlet series, in particular to the classes $\mathcal{D}_{\Lambda}$. 
More specifically, see Definition 5 of \cite[Section 4]{SV} referred to the Besicovitch space which contains the classes of functions which are associated with Fourier or Dirichlet series and for which the extension of our equivalence relation makes sense. For more information on the Besicovitch space, see \cite[Section 3.4]{Corduneanu}.

\section{The auxiliary functions associated with the classes $\mathcal{D}_{\Lambda}$}\label{sv}

Based on Definition \ref{auxuliaryfunc0}, applied to our particular case of almost periodic functions with the Bochner-Fej\'{e}r summation method, note that to every almost periodic function $f\in \mathcal{D}_{\Lambda}$, with $\Lambda$ a set of exponents which has an integral basis, we can associate an auxiliary function $F_f$ of countably many real variables as follows.

\begin{definition}\label{auxuliaryfunc}
Given $\Lambda=\{\lambda_1,\lambda_2,\ldots,\lambda_j,\ldots\}$ a set of exponents which has an integral basis, let $f(s)\in\mathcal{D}_{\Lambda}$ be an almost periodic function in $\{s\in\mathbb{C}:\alpha<\operatorname{Re}s<\beta\}$, $-\infty\leq\alpha<\beta\leq\infty$, whose Dirichlet series is given by $\sum_{j\geq 1}a_je^{\lambda_js}$.
For each $j\geq 1$ let $\mathbf{r}_j$ be the vector of integer components satisfying the equality $\lambda_j=<\mathbf{r}_j,\mathbf{g}>=\sum_{k=1}^{q_j}r_{j,k}g_k$, where
$\mathbf{g}:=(g_1,\ldots,g_k,\ldots)$ is the vector of the elements of an integral basis $G_{\Lambda}$ for $\Lambda$.
Then we define the auxiliary function  $F_f: (\alpha,\beta)
\times
\mathbb{R}
^{\sharp G_{\Lambda}}\rightarrow
\mathbb{C}$
 associated with $f$, relative to the basis $G_{\Lambda}$, as
\begin{equation}\label{2.4}
F_{f}(\sigma,\mathbf{x}):=\sum_{j\geq1}a_j e^{\lambda_j\sigma
}e^{<\mathbf{r}_j,\mathbf{x}>i}\text{, }\sigma \in
(\alpha,\beta)
\text{, }\mathbf{x}=(x_1,x_2,\ldots,x_k,\ldots)\in%
\mathbb{R}^{\sharp G_{\Lambda}},
\end{equation}%
where series (\ref{2.4}) is summed by Bochner-Fej\'{e}r procedure, applied at $t=0$ to the exponential sum $\sum_{j\geq1}a_j e^{<\mathbf{r}_j,\mathbf{x}>i}e^{\lambda_js}$.
\end{definition}

If $f\in AP(U,\mathbb{C})$, whose set of associated exponents $\{\lambda_1,\lambda_2,\ldots\}$ has an integral basis, it was proved in \cite[Lemma 3]{SV} that any function of its equivalence class is also included in $AP(U,\mathbb{C})$. Then we first note that, if $\sum_{j\geq1}a_j e^{\lambda_js}$ is the Dirichlet series of $f\in AP(U,\mathbb{C})$, for every choice of $\mathbf{x}\in\mathbb{R}^{\sharp G_{\Lambda}}$, the sum $\sum_{j\geq1}a_j e^{<\mathbf{r}_j,\mathbf{x}>i}e^{\lambda_js}$ represents the Dirichlet series of an almost periodic function. 

We second note that if the Dirichlet series of $f(s)\in AP(U,\mathbb{C})$ converges uniformly on $U=\{s\in\mathbb{C}:\alpha<\operatorname{Re}s<\beta\}$, then $f(s)$ coincides with its Dirichlet series and (\ref{2.4}) can be viewed as summation by partial sums or ordinary summation.

For the case of the partial sums of the Riemann zeta function $\zeta_n$, the auxiliary function $F_{\zeta_n}$ is called in \cite[p. 163]{Spira2} the ``companion function"\ of $\zeta_n$ (see also \cite[Theorem 3.1]{Avellar} or \cite[Theorem 1]{nonisolation} for the case of exponential polynomials).


In addition, we third note that the Dirichlet series $\sum_{j\geq 1}a_je^{\lambda_js}$, associated with a function $f\in\mathcal{D}_{\Lambda}$ such that $\Lambda$ has an integral basis, arises from its auxiliary function $F_f$ by a special choice of its variables, that is $F_f(\sigma,t\mathbf{g})=\sum_{j\geq 1}a_je^{\lambda_j(\sigma+it)}$. In fact, as we will see in this section, there is a strong link between the sets of values in the complex plane taken by both functions.

In this respect, under the assumption that the set of exponents or frequencies is finite, or it has an integral basis, it is clear that the auxiliary function $F_f(\sigma,\mathbf{x})$ is periodic in each of its coordinates $x_k$, $k\geq 1$, and $\sharp G_{\Lambda}$ is related to the dimension of the higher dimensional space in which the given function, for a fixed value of $\sigma$, can be embedded as a periodic function in each of its coordinates.

We next show a characterization of the property of equivalence of functions in the classes $\mathcal{D}_{\Lambda}$ in terms of this auxiliary function.


\begin{proposition}\label{lequiv20}
Given $\Lambda=\{\lambda_1,\lambda_2,\ldots,\lambda_j,\ldots\}$ a set of exponents which has an integral basis, let $f_1$ and $f_2$ be two almost periodic functions in the class $\mathcal{D}_{\Lambda}$ whose Dirichlet series are given by $\sum_{j\geq 1}a_je^{\lambda_js}$ and $\sum_{j\geq 1}b_je^{\lambda_js}$ respectively. Let $\mathbf{g}:=(g_1,g_2,\ldots,g_k,\ldots)$ be the vector of the elements of an integral basis $G_{\Lambda}$ for $\Lambda$. Thus $f_1$ is equivalent to $f_2$ if and only if there exists some $\mathbf{x}\in \mathbb{R}^{\sharp G_\Lambda}$ such that
$$\sum_{j\geq 1}b_je^{\lambda_j(\sigma+it)}=F_{f_1}(\sigma,\mathbf{x}+t\mathbf{g})$$ for $\sigma+it\in U$, where $U$ is an open vertical strip so that $f_2\in AP(U,\mathbb{C})$.
\end{proposition}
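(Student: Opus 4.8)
The plan is to unwind the definition of the auxiliary function $F_{f_1}$ until the asserted identity becomes a plain statement about the Dirichlet series of $f_2$, and then to combine the coefficient characterization of $\sim$ (see \cite[Proposition 1]{SV}) with the fact that an almost periodic function is determined by its Dirichlet series. First I would fix $\mathbf{x}\in\mathbb{R}^{\sharp G_{\Lambda}}$ and observe that, since the Bochner-Fej\'{e}r polynomials attached to a prescribed set of exponents depend only on that set and not on the coefficients, the exponential sum $\sum_{j\geq1}a_je^{<\mathbf{r}_j,\mathbf{x}>i}e^{\lambda_js}$ is, by \cite[Lemma 3]{SV}, the Dirichlet series of a function $h_{\mathbf{x}}\in AP(U,\mathbb{C})$, and Definition \ref{auxuliaryfunc} then gives $F_{f_1}(\sigma,\mathbf{x})=h_{\mathbf{x}}(\sigma)$. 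Using $<\mathbf{r}_j,\mathbf{g}>=\lambda_j$ one gets $<\mathbf{r}_j,\mathbf{x}+t\mathbf{g}>=<\mathbf{r}_j,\mathbf{x}>+t\lambda_j$, so the exponential sum defining $F_{f_1}(\sigma,\mathbf{x}+t\mathbf{g})$ is $\sum_{j\geq1}a_je^{<\mathbf{r}_j,\mathbf{x}>i}e^{\lambda_j(s+it)}$, which is exactly the Dirichlet series of the vertical translate $s\mapsto h_{\mathbf{x}}(s+it)$ (this generalizes the identity $F_{f}(\sigma,t\mathbf{g})=\sum_{j\geq1}a_je^{\lambda_j(\sigma+it)}$ noted above). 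Hence $F_{f_1}(\sigma,\mathbf{x}+t\mathbf{g})=h_{\mathbf{x}}(\sigma+it)$ for every $\sigma+it\in U$, and, since the left-hand side $\sum_{j\geq1}b_je^{\lambda_j(\sigma+it)}$ denotes the value $f_2(\sigma+it)$ recovered by Bochner-Fej\'{e}r summation, the displayed identity is equivalent to $f_2\equiv h_{\mathbf{x}}$ on $U$.

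It then remains to show that $f_1\sim f_2$ if and only if $f_2\equiv h_{\mathbf{x}}$ on $U$ for some $\mathbf{x}$. If $f_1\sim f_2$, by \cite[Proposition 1]{SV} there is $\mathbf{x}_0\in\mathbb{R}^{\sharp G_{\Lambda}}$ with $b_j=a_je^{<\mathbf{r}_j,\mathbf{x}_0>i}$ for all $j\geq1$; then $h_{\mathbf{x}_0}$ and $f_2$ have the same Dirichlet series, hence coincide on $U$. Conversely, if $f_2\equiv h_{\mathbf{x}}$ on $U$, comparing Dirichlet series gives $b_j=a_je^{<\mathbf{r}_j,\mathbf{x}>i}$ for every $j\geq1$; defining $\psi:V\to\mathbb{R}$ by $\psi(g_k)=x_k$ and extending it $\mathbb{Q}$-linearly (so that $\psi(\lambda_j)=<\mathbf{r}_j,\mathbf{x}>$) shows $b(\lambda_j)=a(\lambda_j)e^{i\psi(\lambda_j)}$, i.e. $f_1\sim f_2$ in the sense of Definition \ref{DefEquiv00}.

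The only delicate point I anticipate is the bookkeeping in the first paragraph: one must verify that replacing $\mathbf{x}$ by $\mathbf{x}+t\mathbf{g}$ inside $F_{f_1}$ genuinely corresponds to a vertical translation of $h_{\mathbf{x}}$ — that is, that Bochner-Fej\'{e}r summation is compatible with this substitution, which is true because the same finite truncations $\sum_{j}p_{j,k}a_je^{<\mathbf{r}_j,\mathbf{x}>i}e^{\lambda_js}$ serve for all values of the parameters and converge uniformly on every reduced strip of $U$ — and that $U$ is implicitly taken within the strip of almost periodicity of $f_1$, so that the right-hand side is defined on all of $U$. The rest is a direct appeal to \cite[Proposition 1]{SV} and to the uniqueness of the Dirichlet coefficients of an almost periodic function (Bohr's mean-value formula).
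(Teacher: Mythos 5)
Your proposal is correct and follows essentially the same route as the paper's own proof: the forward direction rests on \cite[Proposition 1]{SV} together with the computation $<\mathbf{r}_j,\mathbf{x}+t\mathbf{g}>=<\mathbf{r}_j,\mathbf{x}>+t\lambda_j$, and the converse on the uniqueness of the Dirichlet coefficients of an almost periodic function. Your extra care in interpreting both sides as values of almost periodic functions obtained by Bochner--Fej\'{e}r summation, and your explicit construction of the $\mathbb{Q}$-linear map $\psi$, only make explicit what the paper leaves implicit.
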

\begin{proof}
Let $\sum_{j\geq 1}a_je^{\lambda_js}$ and $\sum_{j\geq 1}b_je^{\lambda_js}$ be the Dirichlet series associated with $f_1$ and $f_2$ respectively. Let $U$ be an open vertical strip so that $f_2\in AP(U,\mathbb{C})$.
If $f_1\sim f_2$, then \cite[Proposition 1]{SV} assures the existence of
$\mathbf{x}_0\in \mathbb{R}^{\sharp\Lambda}$
such that $b_j=a_j e^{<\mathbf{r}_j,\mathbf{x}_0>i}$ for $j\geq 1$. Thus, fixed $s=\sigma+it\in U$, we have
$$\displaystyle{\sum_{j\geq 1}b_je^{\lambda_j(\sigma+it)}=\sum_{j\geq1}a_je^{i<\mathbf{r}_j,\mathbf{x}_0>}e^{\lambda_j\sigma}e^{i\lambda_j t}}= \sum_{j\geq1}a_je^{\lambda_j\sigma}e^{i<\mathbf{r}_j,\mathbf{x}_0>}e^{it<\mathbf{r}_j,\mathbf{g}>}=$$
$$F_{f_1}(\sigma,\mathbf{x}_0+t\mathbf{g}).$$
Conversely, suppose the existence of $\mathbf{x}_0\in \mathbb{R}^{\sharp\Lambda}$
such that
$\sum_{j\geq 1}b_je^{\lambda_j(\sigma+it)}=F_{f_1}(\sigma,\mathbf{x}_0+t\mathbf{g})$ for any $\sigma+it\in U$. Hence $$\sum_{j\geq 1}b_je^{\lambda_j(\sigma+it)}=\sum_{j\geq1}a_je^{i<\mathbf{r}_j,\mathbf{x}_0>}e^{\lambda_j (\sigma+it)}\ \forall \sigma+it\in U.$$
Now, by the uniqueness of the coefficients of an exponential sum in $\mathcal{D}_{\Lambda}$, it is clear that
$b_j=a_j e^{<\mathbf{r}_j,\mathbf{x}_0>i}$ for each $j\geq 1$, which shows that $f_1\sim f_2$.
\end{proof}

We next define the following set which will be widely used from now on.

\begin{definition}\label{image}
Given $\Lambda=\{\lambda_1,\lambda_2,\ldots,\lambda_j,\ldots\}$ a set of exponents which has an integral basis, let $f(s)\in \mathcal{D}_{\Lambda}$ be an almost periodic function in an open vertical strip $U$, and $\sigma_0=\operatorname{Re}s_0$ with $s_0\in U$. We define $\operatorname{Img}\left(F_f(\sigma_0,\mathbf{x})\right)$ to be the set of values in the complex plane taken on by the auxiliary function $F_f(\sigma,\mathbf{x})$, relative to the integral basis $G_{\Lambda}$, when $\sigma=\sigma_0$; that is
$$\operatorname{Img}\left(F_f(\sigma_0,\mathbf{x})\right)=\{s\in\mathbb{C}:\exists \mathbf{x}\in\mathbb{R}^{\sharp G_{\Lambda}}\mbox{ such that }s=F_f(\sigma_0,\mathbf{x})\}.$$
\end{definition}

The notation $\operatorname{Img}\left(F_f(\sigma_0,\mathbf{x})\right)$ is well-posed because this set is independent of the integral basis $G_{\Lambda}$ such as the following lemma shows.

\begin{lemma}\label{indep}
Given $\Lambda$ a set of exponents and $G_{\Lambda}$ an integral basis for $\Lambda$, let $f(s)\in \mathcal{D}_{\Lambda}$ be an almost periodic function in an open vertical strip $U$, and $\sigma_0=\operatorname{Re}s_0$ with $s_0\in U$. Then the set $\operatorname{Img}\left(F_f(\sigma_0,\mathbf{x})\right)$ is independent of $G_{\Lambda}$.
\end{lemma}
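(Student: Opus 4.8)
The plan is to describe $\operatorname{Img}\bigl(F_f(\sigma_0,\mathbf{x})\bigr)$ in a basis-free way by identifying the parameter space $\mathbb{R}^{\sharp G_{\Lambda}}$ with the space $\operatorname{Hom}_{\mathbb{Q}}(V,\mathbb{R})$ of all $\mathbb{Q}$-linear maps $\psi\colon V\to\mathbb{R}$, where $V$ is the $\mathbb{Q}$-vector space generated by $\Lambda$. Concretely, fixing the integral basis $G_{\Lambda}=(g_1,g_2,\ldots)$ and given $\mathbf{x}=(x_1,x_2,\ldots)\in\mathbb{R}^{\sharp G_{\Lambda}}$, I would let $\psi_{\mathbf{x}}\colon V\to\mathbb{R}$ be the $\mathbb{Q}$-linear map determined by $\psi_{\mathbf{x}}(g_k)=x_k$ (this is well defined since every element of $V$ is a \emph{finite} rational combination of the $g_k$). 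Because $G_{\Lambda}$ is a $\mathbb{Q}$-basis of $V$, the assignment $\mathbf{x}\mapsto\psi_{\mathbf{x}}$ is a bijection of $\mathbb{R}^{\sharp G_{\Lambda}}$ onto $\operatorname{Hom}_{\mathbb{Q}}(V,\mathbb{R})$; and from $\lambda_j=\sum_k r_{j,k}g_k$ one gets $\psi_{\mathbf{x}}(\lambda_j)=\sum_k r_{j,k}x_k=\langle\mathbf{r}_j,\mathbf{x}\rangle$, so the exponential sum summed by the Bochner--Fej\'er procedure in Definition \ref{auxuliaryfunc} can be rewritten as
$$\sum_{j\ge1}a_j e^{i\langle\mathbf{r}_j,\mathbf{x}\rangle}e^{\lambda_j s}=\sum_{j\ge1}a_j e^{i\psi_{\mathbf{x}}(\lambda_j)}e^{\lambda_j s}.$$

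Next I would argue that the right-hand side, together with the Bochner--Fej\'er value it produces at $t=0$, depends only on $\psi_{\mathbf{x}}$ and on the intrinsic data $\bigl(\Lambda,(a_j)\bigr)$. Indeed, by \cite[Lemma 3]{SV} this series is the Dirichlet series of an almost periodic function $g_{\psi_{\mathbf{x}}}\in AP(U,\mathbb{C})$, which (as recalled after Definition \ref{DF}) is uniquely determined by that series, the Bochner--Fej\'er polynomials $P_k(s)=\sum_j p_{j,k}\,a_j e^{i\psi_{\mathbf{x}}(\lambda_j)}e^{\lambda_j s}$ being manufactured from the exponents $\lambda_j$ and the coefficients $a_j e^{i\psi_{\mathbf{x}}(\lambda_j)}$ alone and converging uniformly on reduced strips to $g_{\psi_{\mathbf{x}}}$. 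Hence $F_f(\sigma_0,\mathbf{x})=g_{\psi_{\mathbf{x}}}(\sigma_0)$, and therefore
$$\operatorname{Img}\bigl(F_f(\sigma_0,\mathbf{x})\bigr)=\bigl\{\,g_{\psi}(\sigma_0)\ :\ \psi\in\operatorname{Hom}_{\mathbb{Q}}(V,\mathbb{R})\,\bigr\},$$
an expression that makes no reference whatsoever to $G_{\Lambda}$. Running the identical computation with a second integral basis $G'_{\Lambda}$, whose associated map $\mathbf{y}\mapsto\psi_{\mathbf{y}}$ is again a bijection onto the \emph{same} set $\operatorname{Hom}_{\mathbb{Q}}(V,\mathbb{R})$, yields the identical set of complex values, which is the assertion of the lemma.

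The one point that deserves care—and which I expect to be the only real obstacle—is the intrinsicness of the summation step: one must be sure that the number returned by the Bochner--Fej\'er method at $t=0$ is a function of the Dirichlet series only (equivalently, of the almost periodic function it represents) and not of the particular coordinates in which the exponents were expressed. This is precisely guaranteed by the approximation property quoted after Definition \ref{DF}: once the coefficients $a_j e^{i\psi(\lambda_j)}$ on the exponents $\lambda_j$ are fixed, the Bochner--Fej\'er polynomials, their uniform limit, and hence the value of that limit at $\sigma_0$, are all determined. Everything else is the routine change-of-basis bookkeeping for $\mathbb{Q}$-linear maps sketched above, so I would keep that part brief.
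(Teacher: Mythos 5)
Your proposal is correct and is essentially the paper's argument in invariant dress: the paper passes from coordinates $\mathbf{x}_1$ relative to $G_{\Lambda}$ to coordinates $x_{2,k}=\langle\mathbf{t}_k,\mathbf{x}_1\rangle$ relative to a second basis $H_{\Lambda}$ so that $\langle\mathbf{r}_j,\mathbf{x}_1\rangle=\langle\mathbf{s}_j,\mathbf{x}_2\rangle$, which is exactly the coordinate expression of your identification of both parameter spaces with $\operatorname{Hom}_{\mathbb{Q}}(V,\mathbb{R})$. Your explicit remark that the Bochner--Fej\'{e}r value at $t=0$ depends only on the Dirichlet series (not on the chosen basis) is a point the paper leaves implicit, and is correctly justified.
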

\begin{proof}
Let $\sum_{j\geq 1}a_je^{\lambda_js}$ be the Dirichlet series associated with $f(s)\in \mathcal{D}_{\Lambda}$, and  $G_{\Lambda}$ and $H_{\Lambda}$ be two integral basis for $\Lambda$. Also, let $\operatorname{Img}\left(F_f^{G_{\Lambda}}(\sigma_0,\mathbf{x})\right)$ and $\operatorname{Img}\left(F_f^{H_{\Lambda}}(\sigma_0,\mathbf{x})\right)$ be the set of values in the complex plane taken on by the auxiliary functions, relative to the basis $G_{\Lambda}$ and $H_{\Lambda}$ respectively.
For each $j\geq 1$ let $\mathbf{r}_j$ and $\mathbf{s}_j$ be the vector of integer components so that  $\lambda_j=<\mathbf{r}_j,\mathbf{g}>$ and $\lambda_j=<\mathbf{s}_j,\mathbf{h}>$, with $\mathbf{g}$ and $\mathbf{h}$ the vectors associated with the basis $G_{\Lambda}$ and $H_{\Lambda}$, respectively. Finally, for each $k\geq 1$, let $\mathbf{t}_k$ be the vector so that $h_k=<\mathbf{t}_k,\mathbf{g}>$.
Take $w_1\in \operatorname{Img}\left(F_f^{G_{\Lambda}}(\sigma_0,\mathbf{x})\right)$, then there exists $\mathbf{x}_1\in\mathbb{R}^{\sharp G_{\Lambda}}\mbox{ such that }w_1=F_f^{G_{\Lambda}}(\sigma_0,\mathbf{x}_1)$. Hence
$$w_1=F_f^{G_{\Lambda}}(\sigma_0,\mathbf{x}_1)=\sum_{j\geq1}a_j e^{\lambda_j\sigma_0
}e^{<\mathbf{r}_j,\mathbf{x}_1>i}=\sum_{j\geq1}a_j e^{\lambda_j\sigma_0
}e^{<\mathbf{s}_j,\mathbf{x}_2>i},$$
where $\mathbf{x}_2$ is defined as $x_{2,k}=<\mathbf{t}_k,\mathbf{x}_1>$ for each $k\geq 1$. Therefore, $w_1=F_f^{H_{\Lambda}}(\sigma_0,\mathbf{x}_2)$ and $w_1\in \operatorname{Img}\left(F_f^{H_{\Lambda}}(\sigma_0,\mathbf{x})\right)$, which gives $$\operatorname{Img}\left(F_f^{G_{\Lambda}}(\sigma_0,\mathbf{x})\right)\subseteq \operatorname{Img}\left(F_f^{H_{\Lambda}}(\sigma_0,\mathbf{x})\right).$$ An analogous argument shows that
$\operatorname{Img}\left(F_f^{H_{\Lambda}}(\sigma_0,\mathbf{x})\right)\subseteq \operatorname{Img}\left(F_f^{G_{\Lambda}}(\sigma_0,\mathbf{x})\right)$, which proves the result.
\end{proof}

The following lemma will be used in order to obtain some important results of this paper.

\begin{lemma}\label{lemaulti}
Given $\Lambda$ a set of exponents which has an integral basis, let $f(s)\in \mathcal{D}_{\Lambda}$ be an almost periodic function in a vertical strip $\{s=\sigma+it:\alpha<\sigma<\beta\}$. Consider $E$ a compact set of real numbers included in $(\alpha,\beta)$. 
Thus
$\bigcup_{\sigma\in E}\operatorname{Img}\left(F_f(\sigma,\mathbf{x})\right)$ is closed.
\end{lemma}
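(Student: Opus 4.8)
The plan is to exhibit $\bigcup_{\sigma\in E}\operatorname{Img}\left(F_f(\sigma,\mathbf{x})\right)$ as the image of a compact set under a continuous map; closedness then follows at once, since $\mathbb{C}$ is Hausdorff. Because every $r_{j,k}$ is an integer, $F_f(\sigma,\mathbf{x})$ is $2\pi$-periodic in each coordinate $x_{k}$, so for each $\sigma$ it factors through the quotient torus $\mathbb{T}^{\sharp G_{\Lambda}}:=(\mathbb{R}/2\pi\mathbb{Z})^{\sharp G_{\Lambda}}$, which we endow with the product topology. As $\sharp G_{\Lambda}\leq\aleph_{0}$, Tychonoff's theorem shows that $\mathbb{T}^{\sharp G_{\Lambda}}$ is compact, and hence so is $E\times\mathbb{T}^{\sharp G_{\Lambda}}$. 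By periodicity, $\operatorname{Img}\left(F_f(\sigma,\mathbf{x})\right)=\{F_f(\sigma,\mathbf{x}):\mathbf{x}\in\mathbb{T}^{\sharp G_{\Lambda}}\}$ for every $\sigma$, whence
$$\bigcup_{\sigma\in E}\operatorname{Img}\left(F_f(\sigma,\mathbf{x})\right)=F_f\bigl(E\times\mathbb{T}^{\sharp G_{\Lambda}}\bigr).$$
Therefore it is enough to check that $F_f$ is continuous on $E\times\mathbb{T}^{\sharp G_{\Lambda}}$.

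To obtain this continuity I would use the Bochner-Fej\'{e}r approximation. Fix $\alpha<\alpha'\leq\beta'<\beta$ with $E\subset[\alpha',\beta']$, and let $P_{k}(s)=\sum_{j\geq1}p_{j,k}a_{j}e^{\lambda_{j}s}$ be Bochner-Fej\'{e}r polynomials of $f$ whose multipliers $p_{j,k}$ are chosen to depend only on $k$ and on the integral basis $G_{\Lambda}$, so that the same sequence simultaneously approximates every function equivalent to $f$. Put $\widehat{P}_{k}(\sigma,\mathbf{x}):=\sum_{j\geq1}p_{j,k}a_{j}e^{\lambda_{j}\sigma}e^{<\mathbf{r}_{j},\mathbf{x}>i}$. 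Each $\widehat{P}_{k}$ is a finite trigonometric sum involving only finitely many of the $x_{k}$'s, hence jointly continuous on $(\alpha,\beta)\times\mathbb{T}^{\sharp G_{\Lambda}}$; moreover $\widehat{P}_{k}(\sigma,t\mathbf{g})=P_{k}(\sigma+it)$, and for each fixed $\mathbf{x}\in\mathbb{R}^{\sharp G_{\Lambda}}$ the function $\widehat{P}_{k}(\,\cdot\,,\mathbf{x})$ is, evaluated at $t=0$, the Bochner-Fej\'{e}r polynomial (with the same multipliers) of the function in $AP(U,\mathbb{C})$ whose Dirichlet series is $\sum_{j\geq1}a_{j}e^{<\mathbf{r}_{j},\mathbf{x}>i}e^{\lambda_{j}s}$, which exists by \cite[Lemma 3]{SV}. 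Consequently $\widehat{P}_{k}(\sigma,\mathbf{x})\to F_f(\sigma,\mathbf{x})$ as $k\to\infty$, and I would argue that this convergence is uniform on $E\times\mathbb{R}^{\sharp G_{\Lambda}}$; being a uniform limit of jointly continuous functions on $E\times\mathbb{T}^{\sharp G_{\Lambda}}$, $F_f$ is then jointly continuous there, and the statement follows because $F_f\bigl(E\times\mathbb{T}^{\sharp G_{\Lambda}}\bigr)$ is compact, hence closed in $\mathbb{C}$.

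The genuine obstacle is precisely the uniformity in $\mathbf{x}$ of the Bochner-Fej\'{e}r approximation, that is, the joint continuity of $F_f$ on the (possibly infinite-dimensional) torus. For a single value of $\sigma$ this amounts to the classical statement that a Bohr almost periodic function on $\mathbb{T}^{\sharp G_{\Lambda}}$ is the uniform limit of its Bochner-Fej\'{e}r means; the work lies in running this estimate with a kernel built from the basis $G_{\Lambda}$, bounding the error by the modulus of continuity of $F_f(\sigma,\,\cdot\,)$ in the direction $\mathbf{g}$, and then making that bound uniform for $\sigma$ ranging over the compact set $E$ (where one also invokes the already known uniform convergence $P_{k}(s)\to f(s)$ on reduced strips). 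The remaining ingredients --- periodicity, Tychonoff compactness, and the fact that a continuous image of a compact set is closed --- are routine.
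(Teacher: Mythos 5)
Your argument is the paper's argument in topological dress, so the two are essentially equivalent rather than genuinely different. The paper proves sequential closedness directly: given $w_n=F_f(\sigma_n,\mathbf{x}_n)\to w_0$, it extracts a convergent subsequence of the $\sigma_n$ (compactness of $E$) and then, by a diagonal argument, a further subsequence along which every coordinate $e^{ix_{n,m}}$ converges on the unit circle --- that is, it uses the sequential compactness of $E\times\mathbb{T}^{\sharp G_{\Lambda}}$, which for a countable product is exactly your Tychonoff step --- and finally passes to the limit term by term in (\ref{newq}). You repackage the same compactness as ``a continuous image of a compact set is closed.'' The single analytic point on which everything turns is identical in both versions: in the paper it is the unjustified interchange of the limit $k\to\infty$ with the (Bochner--Fej\'{e}r) summation in (\ref{newq}); in your write-up it is the joint continuity of $F_f$ on $E\times\mathbb{T}^{\sharp G_{\Lambda}}$, i.e.\ the uniformity in $\mathbf{x}$ of the Bochner--Fej\'{e}r approximation, which you correctly isolate but leave as a sketch. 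Your proposed route to it is the right one and can be made precise in a few lines: since the multipliers $p_{j,k}$ depend only on the exponents, the Bochner--Fej\'{e}r operator commutes with vertical translation, so $\sup_{\sigma\in E}\bigl|\widehat{P}_k(\sigma,t\mathbf{g})-F_f(\sigma,t\mathbf{g})\bigr|=\sup_{\sigma\in E}\bigl|P_k(\sigma+it)-f(\sigma+it)\bigr|$ is bounded independently of $t$ by the uniform convergence on reduced strips, and the bound propagates to every $\mathbf{x}$ because the rational independence of $G_{\Lambda}$ makes the orbit $\{t\mathbf{g}\bmod 2\pi\}$ dense in $\mathbb{T}^{\sharp G_{\Lambda}}$ (Kronecker), each $F_f(\cdot,\mathbf{x})$ being a uniform limit of translates of $f$ by \cite[Theorem 4]{SV}. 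Until you supply that estimate your proof is incomplete at exactly the point where the paper's is silent; once supplied, your version is if anything the cleaner of the two.
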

\begin{proof}
Let $w_1,w_2,\ldots,w_j,\ldots$ be a sequence tending to $w_0$, where $$w_j\in\bigcup_{\sigma\in E}\operatorname{Img}\left(F_f(\sigma,\mathbf{x})\right)\mbox{ for each }j\in\mathbb{N}.$$ Thus, to each $w_n$ there corresponds a $\sigma_n\in E$ such that $w_n\in \operatorname{Img}\left(F_f(\sigma_n,\mathbf{x})\right)$ or equivalently $w_n=F_f(\sigma_n,\mathbf{x}_n)$ for some vector $\mathbf{x}_n$. Since $E$ is compact, there exists a subsequence $\{\sigma_{n_k}\}_k\subset \{\sigma_n\}_n$ which converges to $\sigma_0\in E$.
If $\sum_{j\geq 1}a_je^{\lambda_js}$ is the Dirichlet series associated with $f(s)$, then we can write
\begin{equation}\label{newq}
w_{n_k}=\sum_{j\geq 1}a_je^{\lambda_j\sigma_{n_k}}e^{i<\mathbf{r}_j,\mathbf{x}_{n_k}>}=\sum_{j\geq 1}a_je^{\lambda_j\sigma_{n_k}}\prod_{m=1}^{q_j}s_{n_k,m}^{r_{j,m}},
\end{equation}
where the values $s_{n_k,m}:=e^{ix_{n_k,m}}$ are on the unit circle and $\mathbf{r}_j$ is the vector of entire components associated with an integral basis for $\Lambda$. Then the sequence $\{s_{n_1,1},s_{n_2,1},\ldots,s_{n_k,1},\ldots\}$ contains a subsequence $\{s_{l_m,1}\}_m$ which converges to a point $s_{0,1}=e^{ix_{0,1}}$, $x_{0,1}\in[0,2\pi)$, on the unit circle. Thus we consider the sequence given by $\{s_{l_1,2},s_{l_2,2},\ldots,s_{l_m,2},\ldots\}$ and we notice that it contains a subsequence $\{s_{q_m,2}\}_m$ which converges to a point $s_{0,2}=e^{ix_{0,2}}$, $x_{0,2}\in[0,2\pi)$, on the unit circle. Nextly, we move then to $\{s_{q_1,3},w_{q_2,3},\ldots,w_{q_k,3},\ldots\}$ and so on. In this way, we construct a vector $\mathbf{s}_0=(s_{0,1},s_{0,2},\ldots)$ on the unit circle which, by taking the limit in (\ref{newq}), satisfies
$$w_0=\sum_{j\geq 1}a_je^{\lambda_j\sigma_{0}}\prod_{m=1}^{q_j}s_{0,m}^{r_{j,m}}=\sum_{j\geq 1}a_je^{\lambda_j\sigma_{0}}e^{i<\mathbf{r}_j,\mathbf{x}_0>}=F_f(\sigma_0,\mathbf{x}_0),$$
where $\mathbf{x}_0:=(x_{0,1},x_{0,2},\ldots)$.
Hence $w_0\in \operatorname{Img}\left(F_f(\sigma_0,\mathbf{x})\right)$ and the result follows.
\end{proof}

\section{Main results}

Given a function $f(s)$, take the notation $$\operatorname{Img}\left(f(\sigma_0+it)\right)=\{s\in\mathbb{C}:\exists t\in\mathbb{R}\mbox{ such that }s=f(\sigma_0+it)\}.$$
\begin{remark}\label{remark4}
Consider $f\in AP(U,\mathbb{C})$, for some vertical strip $U=\{s\in\mathbb{C}:\alpha<\operatorname{Re}s<\beta\}$, whose Dirichlet series is given by $\sum_{j\geq 1}a_je^{\lambda_js}$. Note that any $w_0\in\operatorname{Img}\left(f(\sigma_0+it)\right)$, with $\sigma_0\in (\alpha,\beta)$, can be obtained as uniform limit of the Bochner-Fej\'{e}r polynomials which converge to $f(s)$ on every reduced strip in $U$ and formally to its associated Dirichlet series. Hence $w_0$ can also be written as $\sum_{j\geq 1}a_je^{\lambda_j(\sigma_0+it_0)}$ for some $t_0\in\mathbb{R}$, and vice versa. In fact, we recall that if the Dirichlet series is uniformly convergent on a vertical strip $U$, then it coincides with $f(s)$.
\end{remark}

We next show the first important result in this paper concerning the connection between our equivalence relation and the set of values in the complex
plane taken on by the auxiliary function.

\begin{proposition}\label{pult}
Given $\Lambda$ a set of exponents which has an integral basis, let $f(s)\in \mathcal{D}_{\Lambda}$ be an almost periodic function in an open vertical strip $U$, and $\sigma_0=\operatorname{Re}s_0$ with $s_0\in U$.
\begin{itemize}
\item[i)] If $f_1\sim f$, then $\operatorname{Img}\left(f_1(\sigma_0+it)\right)\subset \overline{\operatorname{Img}\left(f(\sigma_0+it)\right)}$ and $$\overline{\operatorname{Img}\left(f(\sigma_0+it)\right)}= \overline{\operatorname{Img}\left(f_1(\sigma_0+it)\right)}.$$

\item[ii)] $\operatorname{Img}\left(F_f(\sigma_0,\mathbf{x})\right)=\bigcup_{f_k\sim f}\operatorname{Img}\left(f_k(\sigma_0+it)\right).$


\end{itemize}
\end{proposition}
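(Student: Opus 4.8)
The plan is to prove part~(ii) first, which identifies $\operatorname{Img}(F_f(\sigma_0,\mathbf{x}))$ with the union of the vertical-line value sets over the whole equivalence class, and then to derive part~(i) from part~(ii) together with a Kronecker-type density argument along the diagonal orbit $t\mapsto t\mathbf{g}$. Part~(ii) is essentially a translation between the two ways of writing a value and uses only tools already available. For the inclusion $\supseteq$: if $f_k\sim f$ and $w=f_k(\sigma_0+it_0)$, then by Remark~\ref{remark4} $w=\sum_{j\geq1}b_je^{\lambda_j(\sigma_0+it_0)}$, and by \cite[Proposition~1]{SV} $b_j=a_je^{<\mathbf{r}_j,\mathbf{x}_0>i}$ for some $\mathbf{x}_0$, so $w=\sum_{j\geq1}a_je^{\lambda_j\sigma_0}e^{<\mathbf{r}_j,\mathbf{x}_0+t_0\mathbf{g}>i}=F_f(\sigma_0,\mathbf{x}_0+t_0\mathbf{g})$. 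For the inclusion $\subseteq$: given $w=F_f(\sigma_0,\mathbf{x}_1)$, the exponential sum $\sum_{j\geq1}a_je^{<\mathbf{r}_j,\mathbf{x}_1>i}e^{\lambda_js}$ is, by the observation following Definition~\ref{auxuliaryfunc} together with \cite[Lemma~3]{SV}, the Dirichlet series of some $f_k\in\mathcal{D}_\Lambda$ with $f_k\sim f$; applying Remark~\ref{remark4} with $t_0=0$ (using that $\sigma_0\in U$, since $U$ is a vertical strip) gives $w=f_k(\sigma_0)\in\operatorname{Img}(f_k(\sigma_0+it))$, which lies in the union.

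For part~(i), note first that by part~(ii), since $f_1\sim f$, every $w\in\operatorname{Img}(f_1(\sigma_0+it))$ has the form $w=F_f(\sigma_0,\mathbf{x}_1)$ for some $\mathbf{x}_1\in\mathbb{R}^{\sharp G_{\Lambda}}$, so it suffices to prove $F_f(\sigma_0,\mathbf{x}_1)\in\overline{\operatorname{Img}(f(\sigma_0+it))}$; the equality of closures then follows at once by interchanging the roles of $f$ and $f_1$ (legitimate because $\sim$ is symmetric). Fix $\varepsilon>0$. Choose a Bochner--Fej\'er polynomial $P_N(s)=\sum_{j\geq1}p_{j,N}a_je^{\lambda_js}$ of $f$ with $|P_N(\sigma_0+it)-f(\sigma_0+it)|<\varepsilon/3$ for every $t\in\mathbb{R}$ (uniform convergence on a reduced strip of $U$ containing the line $\operatorname{Re}s=\sigma_0$, which is legitimate since $\sigma_0\in(\alpha,\beta)$), and, enlarging $N$ if necessary, with $|P_N(\sigma_0,\mathbf{x}_1)-F_f(\sigma_0,\mathbf{x}_1)|<\varepsilon/3$ as well; the latter is just the pointwise Bochner--Fej\'er convergence defining $F_f$ in Definition~\ref{auxuliaryfunc}, the multipliers $p_{j,N}$ being common to $f$ and to its equivalents since they depend only on the exponents. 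Now $\mathbf{x}\mapsto P_N(\sigma_0,\mathbf{x})=\sum_{j}p_{j,N}a_je^{\lambda_j\sigma_0}e^{<\mathbf{r}_j,\mathbf{x}>i}$ depends on only finitely many coordinates $x_1,\dots,x_m$, is continuous, and is $2\pi$-periodic in each of them; let $\delta>0$ be a modulus of continuity matching the error $\varepsilon/3$. Since $g_1,\dots,g_m$ are linearly independent over $\mathbb{Q}$ (being part of the basis $G_{\Lambda}$), Kronecker's approximation theorem yields $t'\in\mathbb{R}$ such that $(t'g_1,\dots,t'g_m)$ agrees with $(x_{1,1},\dots,x_{1,m})$ modulo $2\pi$ up to $\delta$ in each coordinate, so $|P_N(\sigma_0,t'\mathbf{g})-P_N(\sigma_0,\mathbf{x}_1)|<\varepsilon/3$. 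Since $\lambda_j=<\mathbf{r}_j,\mathbf{g}>$ gives $P_N(\sigma_0,t'\mathbf{g})=P_N(\sigma_0+it')$, the triangle inequality yields $|f(\sigma_0+it')-F_f(\sigma_0,\mathbf{x}_1)|<\varepsilon$; as $\varepsilon$ was arbitrary, $F_f(\sigma_0,\mathbf{x}_1)\in\overline{\operatorname{Img}(f(\sigma_0+it))}$.

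The main obstacle is the final part of~(i): one must reach the genuinely infinite-dimensional point $F_f(\sigma_0,\mathbf{x}_1)$ only through a single, fixed, finite-dimensional trigonometric polynomial $P_N$, so that Kronecker's theorem really applies to its finitely many torus coordinates; this is exactly what the three-term $\varepsilon/3$ splitting is designed to arrange. It is worth stressing that only pointwise (not uniform-in-$\mathbf{x}$) convergence of the Bochner--Fej\'er sums to $F_f$ is needed, because $\mathbf{x}_1$ is held fixed throughout, whereas the convergence $P_N\to f$ along the vertical line is used in its uniform-in-$t$ form; this asymmetry is the point to be careful about.
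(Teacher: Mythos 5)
Your proposal is correct, and part~(ii) follows essentially the paper's own route: both directions are the same translation between $f_k(\sigma_0+it_0)$ and $F_f(\sigma_0,\mathbf{x}_0+t_0\mathbf{g})$ via Remark~\ref{remark4}, \cite[Proposition~1]{SV} (equivalently Proposition~\ref{lequiv20}) and \cite[Lemma~3]{SV}. Where you genuinely diverge is part~(i). The paper proves (i) by invoking \cite[Theorem~4]{SV}: every $f_1\sim f$ is a uniform limit on reduced strips of vertical translates $f(s+i\tau_n)$, so each value $f_1(\sigma_0+it_1)$ is approximated by $f(\sigma_0+i(t_1+\tau))$ directly, with no reference to the auxiliary function at all; (i) is thus independent of (ii) there. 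You instead route (i) through (ii), reducing it to the inclusion $\operatorname{Img}\left(F_f(\sigma_0,\mathbf{x})\right)\subset \overline{\operatorname{Img}\left(f(\sigma_0+it)\right)}$, which you prove by hand with an $\varepsilon/3$ splitting: a fixed Bochner--Fej\'er polynomial $P_N$ (uniform in $t$ on the line, pointwise at the fixed $\mathbf{x}_1$, with the same multipliers $p_{j,N}$ for $f$ and its equivalents), followed by the continuous Kronecker theorem applied to the finitely many torus coordinates on which $P_N$ depends — this is also exactly where the integral-basis hypothesis enters, since $2\pi$-periodicity in each $x_k$ needs $r_{j,k}\in\mathbb{Z}$. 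The trade-off: the paper's argument is shorter but leans on the Bochner-type characterization of the equivalence class from \cite{SV}, while yours is self-contained modulo Kronecker and in fact establishes the nontrivial half of Proposition~\ref{pultnew} (there deduced later from (i), (ii) and Lemma~\ref{lemaulti}) as a by-product. Both are valid; your finite-dimensional reduction via a single fixed $P_N$ is handled correctly, and the symmetry step closing the equality of closures is legitimate since $f_1\in AP(U,\mathbb{C})$ by \cite[Lemma~3]{SV}.
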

\begin{proof}

i) Under the assumption of the existence of an integral basis for $\Lambda$, \cite[Theorem 4]{SV} shows that the functions in the same equivalence class are obtained as limit points of $\mathcal{T}_f=\{f_{\tau}(s):=f(s+i\tau):\tau\in\mathbb{R}\}$, that is, any function $f_1\sim f$ is the limit (in the sense of the uniform convergence on every reduced strip of $U$) of a sequence $\{f_{\tau_n}(s)\}$ with $f_{\tau_n}(s):=f(s+i\tau_n)$.
    Take $w_1\in \operatorname{Img}\left(f_1(\sigma_0+it)\right)$, then there exists $t_1\in\mathbb{R}$ such that $w_1=f_1(\sigma_0+it_1)$. Now, given $\varepsilon>0$ there exists $\tau>0$ such that $|f_1(\sigma_0+it_1)-f_{\tau}(\sigma_0+it_1)|<\varepsilon$, which means that
$$ |w_1-f(\sigma_0+i(t_1+\tau))|<\varepsilon.$$
Now it is immediate that $w_1\in \overline{\operatorname{Img}\left(f(\sigma_0+it)\right)}$ and consequently $$\operatorname{Img}\left(f_1(\sigma_0+it)\right)\subset \overline{\operatorname{Img}\left(f(\sigma_0+it)\right)}.$$
Analogously, by symmetry we have $\operatorname{Img}\left(f(\sigma_0+it)\right)\subset \overline{\operatorname{Img}\left(f_1(\sigma_0+it)\right)}$, which implies that     $$\overline{\operatorname{Img}\left(f(\sigma_0+it)\right)}= \overline{\operatorname{Img}\left(f_1(\sigma_0+it)\right)}.$$

ii)
Take $w_0\in \bigcup_{f_k\sim f}\operatorname{Img}\left(f_k(\sigma_0+it)\right)$, then $w_0\in \operatorname{Img}\left(f_k(\sigma_0+it)\right)$ for some $f_k\sim f$, which means that there exists $t_0\in\mathbb{R}$ such that $$w_0=f_k(\sigma_0+it_0).$$ Note that, by taking Remark \ref{remark4} into account, Proposition \ref{lequiv20} assures the existence of a vector $\mathbf{x}_0$ such that $w_0=F_{f}(\sigma_0,\mathbf{x}_0+t_0\mathbf{g})$. Hence $w_0=F_{f}(\sigma_0,\mathbf{y}_0)$, with $\mathbf{y}_0=\mathbf{x}_0+t_0\mathbf{g}$, which means that $w_0\in \operatorname{Img}\left(F_f(\sigma_0,\mathbf{x})\right)$. Conversely, if $w_0\in \operatorname{Img}\left(F_f(\sigma_0,\mathbf{x})\right)$, then
$w_0=F_{f}(\sigma_0,\mathbf{y}_0)$ for some $\mathbf{y}_0\in\mathbb{R}^{\sharp G_{\Lambda}}$. Take $t_0\in\mathbb{R}$. Since $\mathbf{y}_0=\mathbf{x}_0+t_0\mathbf{g}$, with $\mathbf{x}_0=\mathbf{y}_0-t_0\mathbf{g}$, then
$$w_0=F_{f}(\sigma_0,\mathbf{x}_0+t_0\mathbf{g})=\sum_{j\geq1}a_j e^{\lambda_j\sigma_0
}e^{<\mathbf{r}_j,\mathbf{x}_0+t_0\mathbf{g}>i}=\sum_{j\geq1}a_j e^{\lambda_j(\sigma_0+it_0)
}e^{<\mathbf{r}_j,\mathbf{x}_0>i}.$$
Hence $\sum_{j\geq1}a_je^{<\mathbf{r}_j,\mathbf{x}_0>i} e^{\lambda_js}$ is the associated Dirichlet series of an almost periodic function $h(s)\in AP(U,\mathbb{C})$ such that $h\sim f$ (see \cite[Lemma 3]{SV}) and, by taking Remark \ref{remark4} into account, we have that $w_0=h(\sigma_0+it_0)$, which shows that $w_0\in \bigcup_{f_k\sim f}\operatorname{Img}\left(f_k(\sigma_0+it)\right).$
    \end{proof}

We next prove another important equality. 

\begin{proposition}\label{pultnew}
Given $\Lambda$ a set of exponents which has an integral basis, let $f(s)\in \mathcal{D}_{\Lambda}$ be an almost periodic function in an open vertical strip $U$, and $\sigma_0=\operatorname{Re}s_0$ with $s_0\in U$. Then
    $\operatorname{Img}\left(F_f(\sigma_0,\mathbf{x})\right)=\overline{\operatorname{Img}\left(f_1(\sigma_0+it)\right)}$ for any $f_1\sim f$.
\end{proposition}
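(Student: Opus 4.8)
The plan is to combine Proposition~\ref{pult} with the closedness assertion of Lemma~\ref{lemaulti} applied to the degenerate compact set $E=\{\sigma_0\}$. First I would record the two immediate consequences of what is already available. By Proposition~\ref{pult}(ii), $\operatorname{Img}\left(F_f(\sigma_0,\mathbf{x})\right)=\bigcup_{f_k\sim f}\operatorname{Img}\left(f_k(\sigma_0+it)\right)$, and by Proposition~\ref{pult}(i), for any fixed $f_1\sim f$ every set $\operatorname{Img}\left(f_k(\sigma_0+it)\right)$ with $f_k\sim f$ satisfies $\operatorname{Img}\left(f_k(\sigma_0+it)\right)\subset\overline{\operatorname{Img}\left(f_1(\sigma_0+it)\right)}$ (transitivity of $\sim$ guarantees $f_k\sim f_1$, so part (i) applies with the roles of $f,f_1$ played by $f_1,f_k$). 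Taking the union over all $f_k\sim f$ gives one inclusion:
\[
\operatorname{Img}\left(F_f(\sigma_0,\mathbf{x})\right)=\bigcup_{f_k\sim f}\operatorname{Img}\left(f_k(\sigma_0+it)\right)\subseteq\overline{\operatorname{Img}\left(f_1(\sigma_0+it)\right)}.
\]

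For the reverse inclusion, note that $f_1$ itself is one of the functions equivalent to $f$, so $\operatorname{Img}\left(f_1(\sigma_0+it)\right)\subseteq\bigcup_{f_k\sim f}\operatorname{Img}\left(f_k(\sigma_0+it)\right)=\operatorname{Img}\left(F_f(\sigma_0,\mathbf{x})\right)$. Hence it suffices to show that $\operatorname{Img}\left(F_f(\sigma_0,\mathbf{x})\right)$ is closed, for then passing to closures in the last inclusion yields $\overline{\operatorname{Img}\left(f_1(\sigma_0+it)\right)}\subseteq\operatorname{Img}\left(F_f(\sigma_0,\mathbf{x})\right)$, and combined with the first displayed inclusion this gives the desired equality. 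Closedness is exactly Lemma~\ref{lemaulti} with the compact set $E=\{\sigma_0\}\subset(\alpha,\beta)$: the union $\bigcup_{\sigma\in E}\operatorname{Img}\left(F_f(\sigma,\mathbf{x})\right)$ degenerates to $\operatorname{Img}\left(F_f(\sigma_0,\mathbf{x})\right)$, which is therefore closed.

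I would then assemble these pieces explicitly: $\overline{\operatorname{Img}\left(f_1(\sigma_0+it)\right)}\subseteq\overline{\operatorname{Img}\left(F_f(\sigma_0,\mathbf{x})\right)}=\operatorname{Img}\left(F_f(\sigma_0,\mathbf{x})\right)\subseteq\overline{\operatorname{Img}\left(f_1(\sigma_0+it)\right)}$, so all three sets coincide and in particular $\operatorname{Img}\left(F_f(\sigma_0,\mathbf{x})\right)=\overline{\operatorname{Img}\left(f_1(\sigma_0+it)\right)}$. The only subtlety worth spelling out — and the step most likely to need a careful word — is the use of transitivity in the first paragraph: Proposition~\ref{pult}(i) is stated for functions equivalent to the reference function $f$, so to apply it to a pair $f_k,f_1$ both equivalent to $f$ one must observe that $\sim$ is an equivalence relation (so $f_k\sim f_1$) and that the hypothesis "$\Lambda$ has an integral basis" is shared by all members of the class since they have the same set of exponents. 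No genuinely new estimate is required beyond what Lemma~\ref{lemaulti} already provides; the proof is a short formal manipulation of inclusions.
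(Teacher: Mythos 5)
Your proof is correct and follows essentially the same route as the paper: both arguments sandwich $\operatorname{Img}\left(F_f(\sigma_0,\mathbf{x})\right)$ between $\operatorname{Img}\left(f_1(\sigma_0+it)\right)$ and its closure using parts i) and ii) of Proposition \ref{pult}, and then invoke Lemma \ref{lemaulti} with the singleton compact set $E=\{\sigma_0\}$ to conclude that $\operatorname{Img}\left(F_f(\sigma_0,\mathbf{x})\right)$ is closed. The only cosmetic difference is that the paper first establishes the equality for $f$ itself and then transfers it to $f_1$ via part i), whereas you work with $f_1$ directly through transitivity of $\sim$; both are valid.
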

\begin{proof}
Let $\mathbf{g}$ be the vector associated with a basis $G_{\Lambda}$. Since the Fourier series of $f_{\sigma_0}(t):=f(\sigma_0+it)$ can be obtained as $F_f(\sigma_0,t\mathbf{g})$, with $t\in\mathbb{R}$, by taking Remark \ref{remark4} into account, it is clear that $\operatorname{Img}\left(f(\sigma_0+it)\right)\subset \operatorname{Img}\left(F_f(\sigma_0,\mathbf{x})\right)$. On the other hand,  we deduce from i) and ii) of Proposition \ref{pult} that
    $$\operatorname{Img}\left(f(\sigma_0+it)\right)\subset \operatorname{Img}\left(F_f(\sigma_0,\mathbf{x})\right)=\bigcup_{f_k\sim f}\operatorname{Img}\left(f_k(\sigma_0+it)\right)\subset \overline{\operatorname{Img}\left(f(\sigma_0+it)\right)}.$$
    Finally, by taking the closure and from Lemma \ref{lemaulti}, we conclude that
    $$\operatorname{Img}\left(F_f(\sigma_0,\mathbf{x})\right)=\overline{\operatorname{Img}\left(f(\sigma_0+it)\right)}.$$
Now, the result follows from property i) of Proposition \ref{pult}.
\end{proof}

If $E$ is an arbitrary set of real numbers included in the real projection of the vertical strip $U$ of almost periodicity of a function $f\in AP(U,\mathbb{C})$, we next study the set $\bigcup_{\sigma\in E}\operatorname{Img}\left(F_f(\sigma,\mathbf{x})\right)$.

\begin{proposition}\label{pult2}
Given $\Lambda$ a set of exponents which has an integral basis, let $f(s)\in \mathcal{D}_{\Lambda}$ be an almost periodic function in a vertical strip $\{s=\sigma+it:\alpha<\sigma<\beta\}$. Consider $E$ an arbitrary set of real numbers included in $(\alpha,\beta)$. 
Thus
$$\bigcup_{\sigma\in E}\operatorname{Img}\left(f(\sigma+it)\right)\subset \bigcup_{\sigma\in E}\operatorname{Img}\left(F_f(\sigma,\mathbf{x})\right)\subset \overline{\bigcup_{\sigma\in E}\operatorname{Img}\left(f(\sigma+it)\right)}.$$
\end{proposition}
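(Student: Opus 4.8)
The plan is to deduce the whole statement from Proposition \ref{pultnew}, applied separately for each fixed $\sigma\in E$, together with the elementary fact that a union of closures is contained in the closure of the corresponding union. The point to keep in mind is that here $E$ is \emph{arbitrary}, so Lemma \ref{lemaulti} is not directly available; only the pointwise identity of Proposition \ref{pultnew} is.

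First I would fix $\sigma_0\in E$. Since $E\subset(\alpha,\beta)$, the point $s_0=\sigma_0$ (i.e. $t=0$) lies in the open vertical strip $U=\{s\in\mathbb{C}:\alpha<\operatorname{Re}s<\beta\}$, so Proposition \ref{pultnew} (taking $f_1=f$) applies and gives $\operatorname{Img}\left(F_f(\sigma_0,\mathbf{x})\right)=\overline{\operatorname{Img}\left(f(\sigma_0+it)\right)}$. In particular $\operatorname{Img}\left(f(\sigma_0+it)\right)\subset\operatorname{Img}\left(F_f(\sigma_0,\mathbf{x})\right)\subset\bigcup_{\sigma\in E}\operatorname{Img}\left(F_f(\sigma,\mathbf{x})\right)$; taking the union over all $\sigma_0\in E$ yields the first inclusion $\bigcup_{\sigma\in E}\operatorname{Img}\left(f(\sigma+it)\right)\subset\bigcup_{\sigma\in E}\operatorname{Img}\left(F_f(\sigma,\mathbf{x})\right)$.

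For the second inclusion I would take an arbitrary $w_0$ in the middle set, so that $w_0\in\operatorname{Img}\left(F_f(\sigma_0,\mathbf{x})\right)$ for some $\sigma_0\in E$. Again by Proposition \ref{pultnew}, $w_0\in\overline{\operatorname{Img}\left(f(\sigma_0+it)\right)}$, and since $\operatorname{Img}\left(f(\sigma_0+it)\right)\subset\bigcup_{\sigma\in E}\operatorname{Img}\left(f(\sigma+it)\right)$, monotonicity of the closure operator gives $w_0\in\overline{\bigcup_{\sigma\in E}\operatorname{Img}\left(f(\sigma+it)\right)}$, which is precisely the second inclusion.

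As for the main obstacle: there is in fact no hard step, since all the analytic content (via Remark \ref{remark4} and Lemma \ref{lemaulti}) is already packaged inside Proposition \ref{pultnew}. The only subtlety worth flagging is that one should \emph{not} expect equality in the right-hand inclusion for a general $E$: Lemma \ref{lemaulti} only guarantees that $\bigcup_{\sigma\in E}\operatorname{Img}\left(F_f(\sigma,\mathbf{x})\right)$ is closed when $E$ is compact, so for an arbitrary set $E$ the middle union may fail to be closed and hence may be strictly smaller than the closure appearing on the right.
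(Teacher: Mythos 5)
Your proposal is correct and follows essentially the same route as the paper: both arguments reduce the statement to the pointwise results at each fixed $\sigma_0\in E$ and then take unions, with the only cosmetic difference being that you invoke Proposition \ref{pultnew} for the first inclusion where the paper cites Proposition \ref{pult}. Your closing remark that equality on the right cannot be expected for arbitrary $E$ (only for compact $E$, via Lemma \ref{lemaulti}) is also consistent with how the paper organizes Corollary \ref{pultt}.
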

\begin{proof}
If $w_0\in\bigcup_{\sigma\in E}\operatorname{Img}\left(f(\sigma+it)\right)$ then $w_0\in \operatorname{Img}\left(f(\sigma_0+it)\right)$ for some $\sigma_0\in E$. Now, by Proposition \ref{pult}, we have $$\operatorname{Img}\left(f(\sigma_0+it)\right)\subset \operatorname{Img}\left(F_f(\sigma_0,\mathbf{x})\right)\subset \bigcup_{\sigma\in E}\operatorname{Img}\left(F_f(\sigma,\mathbf{x})\right).$$
Moreover, if $w_0\in \bigcup_{\sigma\in E}\operatorname{Img}\left(F_f(\sigma,\mathbf{x})\right)$ then $w_0\in\operatorname{Img}\left(F_f(\sigma_0,\mathbf{x})\right)$ for some $\sigma_0\in E$ and, by Proposition \ref{pultnew}, $$w_0\in \overline{\operatorname{Img}\left(f(\sigma_0+it)\right)}.$$ Finally, it is clear that $\overline{\operatorname{Img}\left(f(\sigma_0+it)\right)}\subset \overline{\bigcup_{\sigma\in E}\operatorname{Img}\left(f(\sigma+it)\right)}$ and hence the result holds.
\end{proof}

As a consequence of the result above, we formulate the following corollary for the case that the set $E$ is compact.

\begin{corollary}\label{pultt}
Given $\Lambda$ a set of exponents which has an integral basis, let $f(s)\in \mathcal{D}_{\Lambda}$ be an almost periodic function in a vertical strip $\{\sigma+it\in\mathbb{C}:\alpha<\sigma<\beta\}$. Consider $E$ a compact set of real numbers included in $(\alpha,\beta)$. 
Thus
$$\bigcup_{\sigma\in E}\operatorname{Img}\left(F_f(\sigma,\mathbf{x})\right)= \overline{\bigcup_{\sigma\in E}\operatorname{Img}\left(f(\sigma+it)\right)}=\overline{\bigcup_{\sigma\in E}\operatorname{Img}\left(f_1(\sigma+it)\right)},$$
for any $f_1\sim f$.
\end{corollary}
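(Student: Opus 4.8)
The plan is to obtain the corollary as a short deduction from Proposition \ref{pult2}, exploiting the compactness of $E$ through Lemma \ref{lemaulti}, and then to transfer the resulting identity to an arbitrary member of the equivalence class of $f$.

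\emph{Step 1 (the identity for $f$).} Proposition \ref{pult2} gives, for any $E\subset(\alpha,\beta)$, the chain of inclusions
$$\bigcup_{\sigma\in E}\operatorname{Img}\left(f(\sigma+it)\right)\subset\bigcup_{\sigma\in E}\operatorname{Img}\left(F_f(\sigma,\mathbf{x})\right)\subset\overline{\bigcup_{\sigma\in E}\operatorname{Img}\left(f(\sigma+it)\right)}.$$
Taking closures of all three terms, and using that the closure of a closed set is itself, forces $\overline{\bigcup_{\sigma\in E}\operatorname{Img}\left(F_f(\sigma,\mathbf{x})\right)}=\overline{\bigcup_{\sigma\in E}\operatorname{Img}\left(f(\sigma+it)\right)}$. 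Since $E$ is now assumed compact, Lemma \ref{lemaulti} tells us that $\bigcup_{\sigma\in E}\operatorname{Img}\left(F_f(\sigma,\mathbf{x})\right)$ is already closed, hence equal to its own closure; therefore $\bigcup_{\sigma\in E}\operatorname{Img}\left(F_f(\sigma,\mathbf{x})\right)=\overline{\bigcup_{\sigma\in E}\operatorname{Img}\left(f(\sigma+it)\right)}$.

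\emph{Step 2 (passing to $f_1\sim f$).} First I would check that $f_1$ lies in the scope of Step 1: an equivalence $f_1\sim f$ only multiplies the Dirichlet coefficients of $f$ by unimodular factors, so $f_1$ has the same exponent set $\Lambda$, whence $f_1\in\mathcal{D}_{\Lambda}$, and $f_1\in AP(U,\mathbb{C})$ by \cite[Lemma 3]{SV}. Since $\sim$ is an equivalence relation, the classes of $f$ and $f_1$ coincide, so part ii) of Proposition \ref{pult} gives, for each $\sigma_0\in E$,
$$\operatorname{Img}\left(F_f(\sigma_0,\mathbf{x})\right)=\bigcup_{f_k\sim f}\operatorname{Img}\left(f_k(\sigma_0+it)\right)=\operatorname{Img}\left(F_{f_1}(\sigma_0,\mathbf{x})\right).$$
Taking the union over $\sigma_0\in E$ yields $\bigcup_{\sigma\in E}\operatorname{Img}\left(F_f(\sigma,\mathbf{x})\right)=\bigcup_{\sigma\in E}\operatorname{Img}\left(F_{f_1}(\sigma,\mathbf{x})\right)$, and Step 1 applied to $f_1$ identifies the right-hand side with $\overline{\bigcup_{\sigma\in E}\operatorname{Img}\left(f_1(\sigma+it)\right)}$. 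Chaining these equalities with Step 1 produces the announced three-term identity.

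Essentially all of the analytic content has already been established in Proposition \ref{pult2} and Lemma \ref{lemaulti}, so I do not expect a serious obstacle. The only point requiring care is the bookkeeping in Step 2: verifying that $f_1\sim f$ genuinely meets the hypotheses of Proposition \ref{pult2} (same $\Lambda$, same strip $U$, integral basis still available), so that Step 1 may be reapplied to $f_1$, and that the equivalence-class union in part ii) of Proposition \ref{pult} is literally the same set for $f$ and for $f_1$.
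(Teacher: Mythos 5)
Your proof is correct and follows essentially the same route as the paper, whose own proof is just the one-line citation of Proposition \ref{pult2}, Lemma \ref{lemaulti} and Proposition \ref{pult}: your Step 1 is exactly the intended argument (closedness from Lemma \ref{lemaulti} collapses the sandwich of Proposition \ref{pult2}), and your Step 2 merely routes the second equality through part ii) of Proposition \ref{pult} and a reapplication of Step 1 to $f_1$, where the paper invokes part i) directly — an immaterial difference, since both reduce to already-established identities.
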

\begin{proof}
It is clear from Proposition \ref{pult2}, Lemma \ref{lemaulti} and part i) of Proposition \ref{pult}.
\end{proof}

As the following example shows, the converse of Corollary \ref{pultt}, and in particular the converse of part i) in Proposition \ref{pult}, is not true.
\begin{example}\label{ex1nr}
Given $\Lambda=\{\log 2,\log 3,\log 5\}$, consider $f_1(s)=e^{s\log 2}+e^{s\log 3}+2e^{s\log 5}$ and $f_2(s)=e^{s\log 2}+2e^{s\log 3}+e^{s\log 5}$, which are two exponential polynomials in $\mathcal{P}_{\Lambda}$. The auxiliary functions associated with $f_1$ and $f_2$ are
$$F_{f_1}(\sigma,\mathbf{x})=2^{\sigma}e^{x_1 i}+3^{\sigma}e^{x_2 i}+2\cdot 5^{\sigma}e^{x_3 i}$$
and
$$F_{f_2}(\sigma,\mathbf{x})=2^{\sigma}e^{x_1 i}+2\cdot 3^{\sigma}e^{x_2 i}+5^{\sigma}e^{x_3 i},$$
respectively, with $\mathbf{x}=(x_1,x_2,x_3)$ (see Definition \ref{auxuliaryfunc}). Take $\sigma_0=0$. In this case, since $F_{f_1}(0,x_1,x_2,x_3)=F_{f_2}(0,x_1,x_3,x_2)$, it is clear that $$\operatorname{Img}\left(F_{f_1}(\sigma_0,\mathbf{x})\right)=\operatorname{Img}\left(F_{f_2}(\sigma_0,\mathbf{x})\right).$$ Therefore, by part iii) of Proposition \ref{pult} (or Corollary \ref{pultt}), we have $$\overline{\operatorname{Img}\left(f_1(\sigma_0+it)\right)}=\overline{\operatorname{Img}\left(f_2(\sigma_0+it)\right)}.$$
However, it is immediate by Definition \ref{DefEquiv} that $f_1$ and $f_2$ are not equivalent.
\end{example}

At this point we will demonstrate an extension of Bohr's equivalence theorem \cite[Section 8.11]{Apostol}. Given $\Lambda$ a set of exponents for which there exists an integral basis, let $f_1,f_2\in \mathcal{D}_{\Lambda}$ be two equivalent almost periodic functions. We next show that, in any open half-plane or open vertical strip included in their region of almost periodicity, the functions $f_1$ and $f_2$ take the same set of values.

\begin{theorem}\label{beqg}
Given $\Lambda$ a set of exponents which has an integral basis, let $f_1,f_2\in \mathcal{D}_{\Lambda}$ be two equivalent almost periodic functions in a vertical strip $\{\sigma+it\in\mathbb{C}:\alpha<\sigma<\beta\}$. Consider $E$ an open set of real numbers included in $(\alpha,\beta)$.
Thus $$\bigcup_{\sigma\in E}\operatorname{Img}\left(f_1(\sigma+it)\right)=\bigcup_{\sigma\in E}\operatorname{Img}\left(f_2(\sigma+it)\right).$$
That is, the functions $f_1$ and $f_2$ take the same set of values on the region $\{s=\sigma+it\in\mathbb{C}:\sigma\in E\}$.
\end{theorem}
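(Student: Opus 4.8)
The plan is to reduce Theorem \ref{beqg} to the pointwise-in-$\sigma$ statements already proved, plus the fact that an \emph{open} set $E\subset(\alpha,\beta)$ can be exhausted by compact subsets. First I would fix $\sigma_0\in E$ and recall, from part i) of Proposition \ref{pult}, that since $f_1\sim f_2$ one has $\overline{\operatorname{Img}\left(f_1(\sigma_0+it)\right)}=\overline{\operatorname{Img}\left(f_2(\sigma_0+it)\right)}$; moreover Proposition \ref{pultnew} identifies this common closure with $\operatorname{Img}\left(F_{f_1}(\sigma_0,\mathbf{x})\right)=\operatorname{Img}\left(F_{f_2}(\sigma_0,\mathbf{x})\right)$ (the auxiliary functions of equivalent functions having the same image, by Proposition \ref{lequiv20} together with Lemma \ref{indep}). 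So at the level of a single vertical line the two functions already have the same \emph{closure} of values; the real content is to upgrade this to an exact equality once we take the union over the whole open set $E$, i.e.\ to absorb the closure.

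Here is how I would absorb the closure. Let $w_0\in\bigcup_{\sigma\in E}\operatorname{Img}\left(f_2(\sigma+it)\right)$, say $w_0=f_2(\sigma_0+it_0)$ with $\sigma_0\in E$. Choose a compact interval $E_0=[\sigma_0-\delta,\sigma_0+\delta]\subset E$ (possible since $E$ is open). By Corollary \ref{pultt} applied on $E_0$,
$$\bigcup_{\sigma\in E_0}\operatorname{Img}\left(F_{f_1}(\sigma,\mathbf{x})\right)=\overline{\bigcup_{\sigma\in E_0}\operatorname{Img}\left(f_1(\sigma+it)\right)}=\overline{\bigcup_{\sigma\in E_0}\operatorname{Img}\left(f_2(\sigma+it)\right)}\ni w_0,$$
so $w_0\in\operatorname{Img}\left(F_{f_1}(\sigma_1,\mathbf{x})\right)$ for some $\sigma_1\in E_0\subset E$. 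But by Proposition \ref{pult} ii), $\operatorname{Img}\left(F_{f_1}(\sigma_1,\mathbf{x})\right)=\bigcup_{f_k\sim f_1}\operatorname{Img}\left(f_k(\sigma_1+it)\right)$, and every $f_k\sim f_1$ is also $\sim f_2$; hence there is some $g\sim f_1$ with $w_0=g(\sigma_1+it')$ for some $t'\in\mathbb{R}$. It remains to pull this back to $f_1$ itself: since $g\sim f_1$, part i) of Proposition \ref{pult} gives $w_0\in\overline{\operatorname{Img}\left(f_1(\sigma_1+it)\right)}$ — which reintroduces a closure. To finish cleanly I would instead argue directly from $\operatorname{Img}\left(F_{f_1}(\sigma_1,\mathbf{x})\right)=\bigcup_{f_k\sim f_1}\operatorname{Img}\left(f_k(\sigma_1+it)\right)$ that $w_0$ lies in $\operatorname{Img}\left(f_1(\sigma_1+it)\right)$ after a suitable vertical translation: by Proposition \ref{lequiv20} write $\sum_j b_j e^{\lambda_j(\sigma+it)}=F_{f_1}(\sigma,\mathbf{x}_\ast+t\mathbf{g})$; pick $t_1$ with $\mathbf{x}_\ast+t_1\mathbf{g}$ equal mod the relevant periods to the vector realizing $w_0$ — but in general one cannot hit $\mathbf{x}_\ast$ exactly this way, which is precisely the subtlety.

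The correct symmetric way around this, which I would adopt, is to avoid trying to evaluate $f_1$ and instead use the \emph{whole open strip}: by the same Corollary \ref{pultt} argument applied to $E_0$ and to $f_1$, $w_0\in\overline{\bigcup_{\sigma\in E_0}\operatorname{Img}\left(f_1(\sigma+it)\right)}$, so there are $\sigma_n\to\sigma_\infty\in E_0$ and $t_n$ with $f_1(\sigma_n+it_n)\to w_0$. Now I invoke that $f_1\sim f_2$ in the reverse direction: applying the identical chain of reasoning with the roles of $f_1,f_2$ interchanged shows $\bigcup_{\sigma\in E}\operatorname{Img}\left(f_1(\sigma+it)\right)\subset\overline{\bigcup_{\sigma\in E}\operatorname{Img}\left(f_2(\sigma+it)\right)}$ and, symmetrically, the reverse inclusion, so both unions have the same closure. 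To get genuine equality rather than equality of closures, I would finally observe that membership $w_0\in\bigcup_{\sigma\in E}\operatorname{Img}\left(f_2(\sigma+it)\right)$ together with $w_0\in\operatorname{Img}\left(F_{f_1}(\sigma_1,\mathbf{x})\right)=\operatorname{Img}\left(F_{f_2}(\sigma_1,\mathbf{x})\right)$ and Proposition \ref{pultnew} gives $w_0\in\overline{\operatorname{Img}\left(f_1(\sigma_1+it)\right)}$; then, exploiting that $\sigma_1$ can be taken in the \emph{interior} of $E$ so that a whole neighbourhood $(\sigma_1-\eta,\sigma_1+\eta)\subset E$ is available, a limit $f_1(\sigma_n+it_n)\to w_0$ with $\sigma_n\to\sigma_1$ has all $\sigma_n$ eventually in $E$, hence $w_0\in\bigcup_{\sigma\in E}\operatorname{Img}\left(f_1(\sigma+it)\right)$. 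This openness of $E$ is exactly what converts the closure in Proposition \ref{pult}/\ref{pultnew} into an exact equality, and handling this closure-absorption carefully — making sure the approximating real parts stay inside $E$ — is the main obstacle; everything else is assembling the cited propositions.
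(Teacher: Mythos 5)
There is a genuine gap, and it sits exactly at the point you yourself flag as ``the main obstacle.'' Your final step asserts that from $w_0\in\overline{\operatorname{Img}\left(f_1(\sigma_1+it)\right)}$ and the openness of $E$ you can conclude $w_0\in \bigcup_{\sigma\in E}\operatorname{Img}\left(f_1(\sigma+it)\right)$, because a sequence $f_1(\sigma_n+it_n)\to w_0$ has all $\sigma_n$ eventually in $E$. This is a non sequitur: having the real parts of the approximating points lie in $E$ only shows that $w_0$ belongs to the \emph{closure} of $\bigcup_{\sigma\in E}\operatorname{Img}\left(f_1(\sigma+it)\right)$, not to the set itself. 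Nothing in your argument ever produces a point $s$ with $\operatorname{Re}s\in E$ at which $f_1$ (or $f_2$) actually \emph{attains} the value $w_0$; every route you try ends, as you honestly note, with a closure that is never absorbed. Openness of $E$ alone cannot do this: two open sets can have the same closure without being equal, so even combining the open mapping theorem with equality of closures would not finish the proof.

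The missing ingredient is analyticity, used through a normal-families argument plus Hurwitz's theorem, and this is how the paper closes the gap. From $f_2(\sigma_0+it_n)\to w_0$ one forms the vertical translates $h_n(s):=f_2(s+it_n)$; by \cite[Proposition 4]{SV} (essentially Bochner's criterion) a subsequence converges uniformly on compact sets to some $h\sim f_2$, which \emph{exactly} attains $h(\sigma_0)=w_0$. Hurwitz's theorem applied to $h_{n_k}(s)-w_0$ then forces the approximating translates $f_2(s+it_{n_k})$ to take the value $w_0$ at some point of $D(\sigma_0,\varepsilon)$ for $k$ large; since $E$ is open, $\varepsilon$ can be chosen so that the real part of that point stays in $E$, giving $w_0\in\bigcup_{\sigma\in E}\operatorname{Img}\left(f_2(\sigma+it)\right)$. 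This is the step that converts ``limit of values'' into ``attained value,'' and it cannot be replaced by the purely set-theoretic/topological manipulations of $\operatorname{Img}\left(F_f(\sigma,\mathbf{x})\right)$ in your proposal. Your reduction to compact $E_0\subset E$ and the use of Corollary \ref{pultt} are fine as far as they go, but they only reprove equality of closures.
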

\begin{proof} 
Without loss of generality, suppose that $f_1$ and $f_2$ are not constant functions (otherwise it is trivial). Take $w_0\in \bigcup_{\sigma\in E}\operatorname{Img}\left(f_1(\sigma+it)\right)$, then $w_0\in \operatorname{Img}\left(f_1(\sigma_0+it)\right)$ for some $\sigma_0\in E$ and hence $w_0=f_1(\sigma_0+it_0)$ for some $t_0\in\mathbb{R}$. Furthermore, by Proposition \ref{pult}, we get
$w_0\in\overline{\operatorname{Img}\left(f_1(\sigma_0+it)\right)}=\overline{\operatorname{Img}\left(f_2(\sigma_0+it)\right)}$,
which implies that there exists a sequence $\{t_n\}$ of real numbers such that
$$w_0=\lim_{n\to\infty}f_2(\sigma_0+it_n).$$ Take $h_n(s):=f_2(s+it_n)$, $n\in\mathbb{N}$.
By \cite[Proposition 4]{SV}, there exists a subsequence $\{h_{n_k}\}_k\subset \{h_n\}_n$ which converges uniformly on compact subsets to a function $h(s)$, with $h\sim f_2$. Observe that $$\lim_{k\to\infty}h_{n_k}(\sigma_0)=h(\sigma_0)=w_0.$$ Therefore, by Hurwitz's theorem \cite[Section 5.1.3]{Ash2}, there is a positive integer $k_0$ 
such that for $k>k_0$ the functions $h^*_{n_k}(s):=h_{n_k}(s)-w_0$ have one zero in $D(\sigma_0,\varepsilon)$ for any $\varepsilon>0$ sufficiently small. This means that for $k>k_0$ the functions $h_{n_k}(s)=f_2(s+it_{n_k})$, and hence the function $f_2(s)$, take the value $w_0$ on the region $\{s=\sigma+it:\sigma_0-\varepsilon<\sigma<\sigma_0+\varepsilon\}$ for any $\varepsilon>0$ sufficiently small (recall that $E$ is an open set). 
Consequently, $w_0\in \bigcup_{\sigma\in E}\operatorname{Img}\left(f_2(\sigma+it)\right)$.
We analogously prove that $\bigcup_{\sigma\in E}\operatorname{Img}\left(f_2(\sigma+it)\right)\subset \bigcup_{\sigma\in E}\operatorname{Img}\left(f_1(\sigma+it)\right)$.
\end{proof}

As the following example shows, fixed an open set $E$ in $(\alpha,\beta)$, the converse of Theorem \ref{beqg} is not true.
\begin{example}\label{exi}
Given $\Lambda=\{\log 2,\log 3,\log 5\}$, consider the functions of Exam\-ple \ref{ex1nr}, $f_1(s)=e^{s\log 2}+e^{s\log 3}+2e^{s\log 5}$ and $f_2(s)=e^{s\log 2}+2e^{s\log 3}+e^{s\log 5}$, which are two non-equivalent exponential polynomials in $\mathcal{P}_{\Lambda}$. Take $E=(-\infty,0)$. We next demonstrate that
$$\bigcup_{\sigma\in E}\operatorname{Img}\left(f_1(\sigma+it)\right)=\bigcup_{\sigma\in E}\operatorname{Img}\left(f_2(\sigma+it)\right)=\{s\in\mathbb{C}:|s|<4\}.$$
Indeed, if $w\in \bigcup_{\sigma\in E}\operatorname{Img}\left(f_1(\sigma+it)\right)$ then $w=f_1(\sigma_0+it_0)$ for some $\sigma_0<0$ and $t_0\in\mathbb{R}$. Hence $$|w|=|2^{\sigma_0}e^{it_0\log 2}+3^{\sigma_0}e^{it_0\log 3}+2\cdot 5^{\sigma_0}e^{it_0\log 5}|\leq 2^{\sigma_0}+3^{\sigma_0}+2\cdot 5^{\sigma_0}<4.$$ Take now $w_0\in D(0,4)$, then $w_0=re^{i\theta}$ for some $0\leq r<4$ and $\theta\in\mathbb{R}$. Note that the auxiliary function associated with $f_1$, which is
$F_{f_1}(\sigma,\mathbf{x})=2^{\sigma}e^{x_1 i}+3^{\sigma}e^{x_2 i}+2\cdot 5^{\sigma}e^{x_3 i},$
verifies
$$F_{f_1}(\sigma,\theta,\theta,\theta)=(2^{\sigma}+3^{\sigma}+2\cdot 5^{\sigma})e^{i\theta}.$$
Hence there exists $\sigma_1<0$ such that $F_{f_1}(\sigma_1,\theta,\theta,\theta)=w_0$ and thus, by Proposition \ref{pult}, we have $$w_0\in \overline{\operatorname{Img}(f_1(\sigma_1+it))}.$$
Now, since $E$ is an open set, by following the proof of Theorem \ref{beqg}, we get $$w_0\in \bigcup_{\sigma\in E}\operatorname{Img}\left(f_1(\sigma+it)\right).$$
We can analogously prove that $\bigcup_{\sigma\in E}\operatorname{Img}\left(f_2(\sigma+it)\right)=D(0,4)$.
\end{example}

\bibliographystyle{amsplain}

\end{document}